\documentclass[12pt,reqno]{amsart}

\setlength{\textheight}{23.1cm}
\setlength{\textwidth}{16cm}
\setlength{\topmargin}{-0.8cm}
\setlength{\parskip}{0.3\baselineskip}
\hoffset=-1.4cm

\usepackage{amssymb}

\usepackage[all]{xy}

\numberwithin{equation}{section}

\newtheorem{theorem}{Theorem}[section]
\newtheorem{proposition}[theorem]{Proposition}
\newtheorem{lemma}[theorem]{Lemma}
\newtheorem{corollary}[theorem]{Corollary}

\begin{document}

\title[Semiampleness and \'etale triviality of vector bundles]{Semiample vector bundles and a
characterization of \'etale trivial vector bundles}

\author[I. Biswas]{Indranil Biswas}

\address{Department of Mathematics, Shiv Nadar University, NH91, Tehsil Dadri,
Greater Noida, Uttar Pradesh 201314, India}

\email{indranil.biswas@snu.edu.in, indranil29@gmail.com}

\author[D. S. Nagaraj]{D. S. Nagaraj}

\address{Indian Institute of science education and research, Tirupati, Srinivasapuram-Jangalapalli Village, 
Panguru (G.P) Yerpedu Mandal, Tirupati - 517619, Chittoor Dist., Andhra Pradesh, India}

\email{dsn@labs.iisertirupati.ac.in}

\subjclass[2010]{14H30, 14H60}

\keywords{Semiample bundle, finite bundle, semistability, virtually globally generated bundle}

\date{}

\begin{abstract}
We prove that a vector bundle $E$ over a smooth complex projective variety $M$ is \'etale trivial if and
only if $E$ is semiample and $c_1(E)\, \in\, H^2(M,\, {\mathbb Q})$ vanishes. Also, a vector bundle $E$
over a smooth complex projective curve is semiample if and only if it is virtually globally generated.
\end{abstract}

\maketitle

\section{Introduction}

Consider a vector bundle $E$ over an irreducible smooth complex projective variety $M$. It is called
\'etale trivial if its pullback to some finite \'etale covering of $M$ is algebraically trivial.
So $E$ is \'etale trivial if and only if it admits an integrable algebraic connection for which the
image of the monodromy homomorphism is a finite group.
A theorem of Nori says that $E$ is \'etale trivial if and only if it satisfies some nontrivial
polynomial equation whose coefficients are integers (this is explained in Section \ref{se2.1}).

Semiample vector bundles lie between the ample vector bundles and the nef vector bundles. A line bundle
is called semiample if some positive tensor power of it is generated by its global sections. A vector bundle
is semiample if the corresponding tautological line bundle is semiample.

We prove the following (see Theorem \ref{thm1}):

\textit{Let $E$ be a vector bundle over an irreducible smooth complex projective variety $M$. Then
the following two statements are equivalent:}
\begin{enumerate}
\item \textit{$E$ is \'etale trivial.}

\item \textit{$E$ is semiample and $c_1(E)\,=\, 0$.}
\end{enumerate}

Let
$$
0\, \longrightarrow\, V \, \longrightarrow\, E \, \longrightarrow\,
Q \, \longrightarrow\, 0
$$
be a short exact sequence of vector bundles on $M$. We prove the following (see Proposition
\ref{prop2}):

\textit{If $E$ is semiample and $c_1(Q)\,=\, 0$, then the above exact sequence splits.}

In the final section we consider semiample vector bundles on curves. The following is proved
(see Theorem \ref{thm2}):

\textit{A vector bundle $E$ on a smooth complex projective curve $M$ is semiample if and only if $E$
is virtually globally generated.}

\section{Semiampleness and finite bundles}

\subsection{Definitions}\label{se2.1}

Let $M$ be an irreducible complex projective variety. A line bundle $L$ on $M$ is called
\textit{semiample} if $L^{\otimes n}$ is generated by its global sections for some $n\, \geq\,1$
\cite[p.~354, Definition 1.1]{Fu}.
A vector bundle $E$ on $M$ is called \textit{semiample} if the tautological line bundle
${\mathcal O}_{\mathbb{P}(E)}(1)\, \longrightarrow\, \mathbb{P}(E)$ is semiample, where
$\mathbb{P}(E)$ is the projective bundle parametrizing the one-dimensional quotients of the
fibers of $E$.

A line bundle $L$ on $M$ is called
\textit{nef} if the restriction of $L$ to every closed curve on $M$ is of nonnegative degree.
A vector bundle $E$ on $M$ is called \textit{nef} if the tautological line bundle
${\mathcal O}_{\mathbb{P}(E)}(1)$ is nef. Every semiample vector bundle is nef. Any quotient bundle of
a semiample (respectively, nef) vector bundle is semiample (respectively, nef). See \cite{La1},
\cite{La2} for nef bundles and semiample bundles.

Let $E$ be a vector bundle over $M$. By $E^{\otimes 0}$ we denote the trivial line bundle over $M$. Also,
$E^{\oplus 0}$ denotes the vector bundle of rank zero.
For any polynomial $p(t)\,:=\, \sum_{i=0}^n c_i t^i$, where $c_i$ are nonnegative integers, define
$$
p(E)\ :=\ \bigoplus_{i=0}^n (E^{\otimes i})^{\oplus c_i}.
$$
A vector bundle $E$ is called \textit{finite} if there are two distinct polynomials $p(t)\, \not=\, q(t)$, whose
coefficients are nonnegative integers, such that the vector bundles $p(E)$ and $q(E)$ are isomorphic
\cite{We}, \cite{No1}, \cite{No2}.

A vector bundle $E$ on $M$ is called \textit{\'etale trivial} if there is a finite \'etale covering map
$$
\varpi\ :\ \widetilde{M} \ \longrightarrow\ M
$$
such that the pullback $\varpi^*E$ is an algebraically trivial vector bundle. Note that when $M$ is smooth, a
vector bundle $E$ on $M$ is \'etale trivial if and only if $E$ admits an integrable algebraic
connection whose monodromy homomorphism $\pi_1(M,\, x_0)\, \longrightarrow\, \text{GL}(E_{x_0})$ has
the property that its image is a finite subgroup of $\text{GL}(E_{x_0})$.

A special case of a theorem of Nori says that $E$ is finite if and only if it is \'etale trivial
\cite{No1}, \cite{No2}.

For a vector bundle $E$ on a smooth projective variety $M$, by $c_i(E)$ we denote the
Chern class of $E$ in $H^{2i}(M,\, {\mathbb Q})$.

\subsection{A characterization of finite bundles}

Let $M$ be a connected smooth complex projective variety. When $\dim M\, >\, 1$, fix a polarization (same as
the first Chern class of an ample line bundle) on $M$ in order to define the degree of torsion-free coherent
sheaves on $M$. This allows us to define semistable and stable vector bundles on $M$. We note that for a
vector bundle $E$ with $c_1(E)\,=\, 0\, =\, c_2(E)$, the semistability condition for it is independent of the 
choice of the polarization on $M$. Indeed, a vector bundle $E$ with $c_1(E)\,=\, 0\, =\, c_2(E)$ is semistable
if and only if the pullback of $E$ by every map from smooth projective curves to $M$ is semistable
\cite[p.~3--4, Theorem 1.2]{BB}, \cite{Si}.

\begin{theorem}\label{thm1}
Let $E$ be a vector bundle over $M$. Then the following two statements are equivalent:
\begin{enumerate}
\item $E$ is \'etale trivial.

\item $E$ is semiample and $c_1(E)\,=\, 0$.
\end{enumerate}
\end{theorem}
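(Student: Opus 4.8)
The plan is to prove the two implications separately, with the substance concentrated in (2) $\Rightarrow$ (1).

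For (1) $\Rightarrow$ (2), suppose $\varpi\colon \widetilde M \to M$ is finite \'etale with $\varpi^* E \cong \mathcal O_{\widetilde M}^{\oplus r}$. Since $\varpi_*\varpi^*$ acts on rational cohomology as multiplication by $\deg\varpi$, the vanishing $\varpi^* c_1(E) = c_1(\varpi^* E) = 0$ forces $c_1(E)=0$ in $H^2(M,\mathbb Q)$. For semiampleness, I would use that $\mathbb P(\varpi^* E) = \widetilde M \times_M \mathbb P(E)$ and that the natural map $\mu\colon \mathbb P(\varpi^* E)\to \mathbb P(E)$ is finite and surjective with $\mu^*\mathcal O_{\mathbb P(E)}(1) = \mathcal O_{\mathbb P(\varpi^* E)}(1)$. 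As $\varpi^* E$ is trivial, the latter is globally generated, hence semiample; and since semiampleness of a line bundle descends along finite surjective morphisms, $\mathcal O_{\mathbb P(E)}(1)$ is semiample, that is, $E$ is semiample.

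For (2) $\Rightarrow$ (1), the first step is to upgrade the hypotheses to a flat structure. A semiample bundle is nef, so $E$ is nef with $c_1(E)=0$; by the theorem of Demailly--Peternell--Schneider this makes $E$ numerically flat, so $E$ underlies a flat bundle given by a representation $\rho\colon \pi_1(M,x_0)\to \mathrm{GL}(V)$, with $V=E_{x_0}$, whose semisimplification is unitary. If $\rho$ were not semisimple, then $E$ would carry a quotient $Q$ with $c_1(Q)=0$ fitting into a non-split extension, contradicting Proposition \ref{prop2}; iterating, Proposition \ref{prop2} lets me split $E$ into a direct sum of stable unitary flat bundles, each of which is a quotient of $E$ and hence again semiample. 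Thus I may assume $\rho$ is completely reducible and unitary, so that its Zariski closure $\mathbf G=\overline{\rho(\pi_1(M,x_0))}\subset \mathrm{GL}(V)$ is reductive and acts faithfully on $V$.

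The heart of the matter is to show that semiampleness forces $\mathbf G$ to be finite, since then $\rho$ has finite image and $E$ is \'etale trivial. Using the flat structure, $\mathbb P(E)$ is the quotient of $\widetilde M\times \mathbb P(V)$ by $\pi_1(M,x_0)$, and the global sections $H^0(\mathbb P(E),\mathcal O(n))=H^0(M,\mathrm{Sym}^n E)$ restrict on each fibre $\mathbb P(V)$ exactly to the invariants $(\mathrm{Sym}^n V)^{\mathbf G}$. Semiampleness of $E$ means $\mathcal O_{\mathbb P(E)}(n)$ is base-point-free for some $n$; restricting to a fibre, the invariant forms of degree $n$ must then have no common zero on $\mathbb P(V)$, so the null cone of the $\mathbf G$-action is reduced to the origin. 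The final and main step is the GIT input: if $\mathbf G$ is positive-dimensional and reductive it contains a nontrivial one-parameter subgroup $\lambda\colon \mathbb G_m\to\mathbf G$, which acts on $V$ with integer weights, not all zero by faithfulness; after replacing $\lambda$ by its inverse if necessary so that some weight is positive, the corresponding eigenvector $v\neq 0$ satisfies $\lim_{t\to 0}\lambda(t)v=0$, so $v$ lies in the null cone, a contradiction. Hence $\mathbf G$ is finite. I expect the principal obstacle to be this Hilbert--Mumford step together with the bookkeeping needed to pass rigorously from global base-point-freeness on $\mathbb P(E)$ to base-point-freeness of the invariant forms on a single fibre; the reduction to the reductive case via Proposition \ref{prop2} and the passage through numerical flatness are the enabling ingredients that make $\mathbf G$ reductive so that GIT applies.
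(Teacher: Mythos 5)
Your proposal for (2) $\Rightarrow$ (1) has a fatal circularity: you invoke Proposition \ref{prop2} to reduce to semisimple monodromy, but the paper's proof of Proposition \ref{prop2} begins by applying Theorem \ref{thm1} to the quotient $Q$ (``$Q$ is semiample \dots so Theorem \ref{thm1} says that $Q$ is \'etale trivial''), so Proposition \ref{prop2} sits logically downstream of the very theorem you are proving. The only splitting statements independent of Theorem \ref{thm1} are Lemma \ref{lem3} and Corollary \ref{cor1}, and they require the quotient to be trivial, not merely flat with $c_1=0$; so your reduction has no non-circular support. Moreover, the reduction cannot simply be dropped: without it, your key identification --- that global sections of $\mathrm{Sym}^n E$ restrict on a fibre to exactly the $\mathbf G$-invariants $(\mathrm{Sym}^n V)^{\mathbf G}$ --- is false for a general flat structure, even one with unitary semisimplification. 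Concretely, if $\omega \neq 0$ is a holomorphic $1$-form on $M$, then $E = {\mathcal O}_M^{\oplus 2}$ with the flat connection $d + \bigl(\begin{smallmatrix} 0 & \omega \\ 0 & 0 \end{smallmatrix}\bigr)$ has infinite unipotent monodromy (generated by the periods of $\omega$), trivial (hence unitary) semisimplification, and non-parallel global sections; here the sections of $\mathrm{Sym}^n E$ are strictly larger than the monodromy invariants, and the monodromy is infinite even though $E$ is trivial. So the choice of flat structure matters, and your argument as written has no non-circular way to pick the right one.

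This is exactly the gap that Simpson's theory fills in the paper's proof: a semistable bundle with $c_1 = c_2 = 0$ carries a \emph{canonical} integrable connection, functorial in the strong sense that every morphism of coherent sheaves between such bundles is parallel. Consequently every global section of $\mathrm{Sym}^d(E)$ is parallel for the canonical connection, with no semisimplicity or reductivity needed, and finiteness of the monodromy then follows from the paper's Lemma \ref{lem1}: base point freeness of the invariant linear system $W \subset \mathrm{Sym}^d(V)$ gives a finite morphism ${\mathbb P}(V) \to {\mathbb P}(W)$ equivariant for an action that is trivial on the target, so the image of the monodromy in $\mathrm{PGL}(V)$ preserves finite fibres and is finite. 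Your Hilbert--Mumford/null-cone step is a correct but needlessly strong version of this, and it is only available after the reductive reduction you cannot make. Finally, your direction (1) $\Rightarrow$ (2) is essentially right in outline, but note that the sentence ``semiampleness of a line bundle descends along finite surjective morphisms'' is the entire content of that implication: the statement is true (for finite \'etale, indeed finite surjective, maps of normal projective varieties in characteristic zero), and it is precisely what the paper proves by hand by producing base-point-free $\Gamma$-invariant sections of ${\mathcal O}_{{\mathbb P}(\varpi^* E)}(n_0 d_0)$; as written you have outsourced rather than proved that half.
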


\begin{proof}
First assume that $E$ is \'etale trivial. Since $E$ is \'etale trivial, by definition, there is
a finite \'etale covering map
$$
\varpi\,:\, Y\, \longrightarrow\, M
$$
such that the pullback $\varpi^*E$ is a trivial vector bundle. Replacing $Y$ by a further
\'etale covering of $Y$, if necessary, we may assume that the map $\varpi$ is actually Galois. Let
$$
\Gamma\,\,=\,\, \text{Gal}(Y/M) \,\, \subset\, \, \text{Aut}(Y)
$$
be the Galois group of $\varpi$. The action of the Galois group $\Gamma$ on $Y$ has a natural lift
to an action of $\Gamma$ on the pulled back vector bundle
$\varpi^* E$. This action of $\Gamma$ on $\varpi^*E$ produces actions of $\Gamma$
on both ${\mathbb P}(\varpi^* E)$ and ${\mathcal O}_{{\mathbb P}(\varpi^* E)}(1)$. The natural projections
${\mathcal O}_{{\mathbb P}(\varpi^* E)}(1)\, \longrightarrow\, {\mathbb P}(\varpi^* E)$ and
${\mathbb P}(\varpi^* E)\, \longrightarrow\, Y$ are evidently $\Gamma$--equivariant maps.
Let
\begin{equation}\label{e1}
\beta\, :\, {\mathbb P}(\varpi^* E) \,\longrightarrow\, {\mathbb P}(\varpi^* E)/\Gamma
\,=\, {\mathbb P}(E)
\end{equation}
be the natural map. Note that we have $\beta^* {\mathcal O}_{{\mathbb P}(E)}(1)\,=\,
{\mathcal O}_{{\mathbb P}(\varpi^* E)}(1)$.

The action of $\Gamma$ on ${\mathcal O}_{{\mathbb P}(\varpi^* E)}(1)$ produces an action of
$\Gamma$ on ${\mathcal O}_{{\mathbb P}(\varpi^* E)}(\lambda)\,=\,
{\mathcal O}_{{\mathbb P}(\varpi^* E)}(1)^{\otimes \lambda}$ for all $\lambda\, \in\, {\mathbb Z}$.
The action of $\Gamma$ on ${\mathcal O}_{{\mathbb P}(\varpi^* E)}(\lambda)$ produces an action of
$\Gamma$ on $$H^0({\mathbb P}(\varpi^* E),\, {\mathcal O}_{{\mathbb P}(\varpi^* E)}(\lambda)).$$ Let
\begin{equation}\label{el}
H^0({\mathbb P}(\varpi^* E),\, {\mathcal O}_{{\mathbb P}(\varpi^* E)}(\lambda))^\Gamma\,
\, \subset\,\, H^0({\mathbb P}(\varpi^* E),\, {\mathcal O}_{{\mathbb P}(\varpi^* E)}(\lambda))
\end{equation}
be the subspace of $H^0({\mathbb P}(\varpi^* E),\, {\mathcal O}_{{\mathbb P}(\varpi^* E)}(\lambda))$ on
which the finite group $\Gamma$ acts trivially.

We will show that there is an integer $\delta \, \geq\, 1$ such that the subspace
$$
H^0({\mathbb P}(\varpi^* E),\, {\mathcal O}_{{\mathbb P}(\varpi^* E)}(\delta))^\Gamma\,
\, \subset\,\, H^0({\mathbb P}(\varpi^* E),\, {\mathcal O}_{{\mathbb P}(\varpi^* E)}(\delta))
$$
(see \eqref{el})
is base point free, meaning for every point $z\, \in\, {\mathbb P}(\varpi^* E)$ there is a
$\Gamma$--invariant section
$$s_z\, \in\, H^0({\mathbb P}(\varpi^* E),\, {\mathcal O}_{{\mathbb P}(\varpi^* E)}(\delta))^\Gamma$$
such that $s_z(z)\, \not=\, 0$.

To show this, first consider the trivial algebraic vector bundle
$${\mathcal H}\ :=\ Y\times H^0(Y,\, \varpi^* E)\ \longrightarrow\ Y$$ over $Y$ with fiber
$H^0(Y,\, \varpi^* E)$. Let
\begin{equation}\label{ei}
{\mathcal I}\ :\ {\mathcal H}\ \longrightarrow\ \varpi^* E
\end{equation}
be the evaluation map that sends any $(y,\,\omega)\, \in\, Y\times H^0(Y,\, \varpi^* E)$ to
$\omega(y)\, \in\, (\varpi^* E)_y$. Since $\mathcal H$ is a
algebraically trivial vector bundle, the map ${\mathcal I}$
in \eqref{ei} is actually an isomorphism.

The action of $\Gamma$ on $\varpi^* E$ produces an action of $\Gamma$ on $H^0(Y,\, \varpi^* E)$. This
action of $\Gamma$ on $H^0(Y,\, \varpi^* E)$ and the Galois action of $\Gamma$ on $Y$
together produce a diagonal action of $\Gamma$ on ${\mathcal H}\, =\, Y\times H^0(Y,\, \varpi^* E)$.
Now it is evident that the map ${\mathcal I}$ in \eqref{ei} is $\Gamma$--equivariant. Let
\begin{equation}\label{ei2}
\widetilde{\mathcal I}\ :\ {\mathbb P}({\mathcal H})\ =\
Y\times {\mathbb P}(H^0(Y,\, \varpi^* E)) \ \longrightarrow\ {\mathbb P}(\varpi^* E)
\end{equation}
be the isomorphism of projective bundles induced by the isomorphism $\mathcal I$ of vector bundles
in \eqref{ei}.
Since $\mathcal I$ is $\Gamma$--equivariant, it follows that the isomorphism $\widetilde{\mathcal I}$
in \eqref{ei2} is also $\Gamma$--equivariant. So $\widetilde{\mathcal I}$ produces an isomorphism
\begin{equation}\label{ei3}
\widetilde{\mathcal I}'\ :\ {\mathbb P}({\mathcal H})/\Gamma\ =\
(Y\times {\mathbb P}(H^0(Y,\, \varpi^* E)))/\Gamma \ \longrightarrow\ {\mathbb P}(\varpi^* E)/\Gamma\
=\ {\mathbb P}(E).
\end{equation}
On the other hand, there is a natural projection
\begin{equation}\label{ei4}
{\mathbf q}\ : \ {\mathbb P}({\mathcal H})/\Gamma\ =\
(Y\times {\mathbb P}(H^0(Y,\, \varpi^* E)))/\Gamma \ \longrightarrow\ 
{\mathbb P}(H^0(Y,\, \varpi^* E))/\Gamma
\end{equation}
that sends the $\Gamma$-orbit of $(y,\, \omega)\, \in\, Y\times {\mathbb P}(H^0(Y,\, \varpi^* E))$
to the $\Gamma$-orbit of $\omega$. Combining \eqref{ei3} and \eqref{ei4}, we have the projection
\begin{equation}\label{ei5}
\widehat{\mathcal I} \ :=\ {\mathbf q}\circ (\widetilde{\mathcal I}')^{-1}\ :\
{\mathbb P}(E) \ \longrightarrow\ {\mathbb P}(H^0(Y,\, \varpi^* E))/\Gamma .
\end{equation}

Let
\begin{equation}\label{ei6}
\textbf{Q}\ :\ {\mathbb P}(H^0(Y,\, \varpi^* E))\ \longrightarrow \ {\mathbb P}(H^0(Y,\, \varpi^* E))/\Gamma
\end{equation}
be the quotient map for the action of $\Gamma$. Consider the
action of $\Gamma$ on ${\mathbb P}(H^0(Y,\, \varpi^* E))$, and construct the line bundle
\begin{equation}\label{ei7}
{\mathcal L}_0 \ :=\ \bigotimes_{\gamma\in \Gamma} \gamma^* {\mathcal O}_{{\mathbb P}(H^0(Y,\, \varpi^* E))}(1)
\ \longrightarrow\ {\mathbb P}(H^0(Y,\, \varpi^* E)).
\end{equation}
Note that ${\mathcal L}_0$ is isomorphic to ${\mathcal O}_{{\mathbb P}(H^0(Y,\, \varpi^* E))}(d_0)$,
where $d_0\,=\, \# \Gamma$ is the cardinality of the group $\Gamma$. From the construction of ${\mathcal L}_0$
it is evident that there is a unique line bundle
${\mathcal L}$ on ${\mathbb P}(H^0(Y,\, \varpi^* E))/\Gamma$ such that we have
\begin{equation}\label{ei8}
\textbf{Q}^*{\mathcal L}\ =\ {\mathcal L}_0
\end{equation}
as $\Gamma$--equivariant line bundles,
where $\textbf{Q}$ and ${\mathcal L}_0$ are constructed in \eqref{ei6} and \eqref{ei7} respectively. Since
the map $\textbf{Q}$ is surjective and ${\mathcal L}_0$ is ample, from \eqref{ei8} it follows immediately
that ${\mathcal L}$ is ample. Therefore, there is a positive integer $n_0$ such that the line bundle
\begin{equation}\label{ei9}
{\mathcal L}_1\ := \ {\mathcal L}^{\otimes n_0}
\end{equation}
is generated by its global sections. Now consider the line bundle
\begin{equation}\label{ei10}
\widetilde{\mathcal L}\ := \ \widehat{\mathcal I}^*{\mathcal L}_1\ \longrightarrow\
{\mathbb P}(E),
\end{equation}
where $\widehat{\mathcal I}$ and ${\mathcal L}_1$ are constructed in \eqref{ei5} and
\eqref{ei9} respectively. Since ${\mathcal L}_1$ is generated by its global sections, we know
that the line bundle $\widetilde{\mathcal L}$ in \eqref{ei10} is also generated by its global sections.

{}From \eqref{ei7} and \eqref{ei9} it follows that $\beta^*\widetilde{\mathcal L}\,=\,
{\mathcal O}_{{\mathbb P}(\varpi^* E)}(n_0d_0)$, where $\beta$ is the map in \eqref{e1}.
T explain this, note that the
map $$\widehat{\mathcal I}\circ\beta\ :\ {\mathbb P}(\varpi^* E) \
\longrightarrow\, {\mathbb P}(H^0(Y,\, \varpi^* E))/\Gamma$$ coincides with the composition of maps
$$
{\mathbb P}(\varpi^* E) \, \xrightarrow{\,\,\, \widetilde{\mathcal I}^{-1}\,\,\,}\,
Y\times {\mathbb P}(H^0(Y,\, \varpi^* E)) \, \longrightarrow\, {\mathbb P}(H^0(Y,\, \varpi^* E))
\, \stackrel{\mathbf Q}{\longrightarrow}\, {\mathbb P}(H^0(Y,\, \varpi^* E))/\Gamma,
$$
where $Y\times {\mathbb P}(H^0(Y,\, \varpi^* E)) \, \longrightarrow\, {\mathbb P}(H^0(Y,\, \varpi^* E))$
is the natural projection to the second factor. Therefore,
from \eqref{ei7} and \eqref{ei9} it follows that $\beta^*\widetilde{\mathcal L}\,=\,
{\mathcal O}_{{\mathbb P}(\varpi^* E)}(n_0d_0)$. Next note that
\begin{equation}\label{ei11}
H^0({\mathbb P}(E),\, \widetilde{\mathcal L})\ =\ H^0({\mathbb P}(\varpi^* E),\,
\beta^*\widetilde{\mathcal L})^\Gamma\ = \ H^0({\mathbb P}(\varpi^* E),\,
{\mathcal O}_{{\mathbb P}(\varpi^* E)}(n_0d_0))^\Gamma.
\end{equation}
It was observed above that the line bundle $\widetilde{\mathcal L}$ is generated by its global sections.
Consequently, from \eqref{ei11} we conclude that the line
bundle ${\mathcal O}_{{\mathbb P}(\varpi^* E)}(n_0d_0)$ is generated
by $H^0({\mathbb P}(\varpi^* E),\,
{\mathcal O}_{{\mathbb P}(\varpi^* E)}(n_0d_0))^\Gamma$. This proves the statement that
there is an integer $\delta \, \geq\, 1$ such that the subspace
$$
H^0({\mathbb P}(\varpi^* E),\, {\mathcal O}_{{\mathbb P}(\varpi^* E)}(\delta))^\Gamma\,
\, \subset\,\, H^0({\mathbb P}(\varpi^* E),\, {\mathcal O}_{{\mathbb P}(\varpi^* E)}(\delta))
$$
is base point free.

Fix an integer $\delta \, \geq\, 1$ such that the subspace
\begin{equation}\label{a1}
H^0({\mathbb P}(\varpi^* E),\, {\mathcal O}_{{\mathbb P}(\varpi^* E)}(\delta))^\Gamma\,
\, \subset\,\, H^0({\mathbb P}(\varpi^* E),\, {\mathcal O}_{{\mathbb P}(\varpi^* E)}(\delta))
\end{equation}
is base point free.
Since $\beta^* {\mathcal O}_{{\mathbb P}(E)}(1)\,=\, {\mathcal O}_{{\mathbb P}(\varpi^* E)}(1)$, the
subspace in \eqref{a1} corresponds to
$H^0({\mathbb P}(E),\, {\mathcal O}_{{\mathbb P}(E)}(\delta))$.
Given that the subspace in \eqref{a1} is base point free, we conclude that
${\mathcal O}_{{\mathbb P}(E)}(\delta)$ is base point free as well.
Therefore, $E$ is semiample.

Since $\varpi^*E$ is a trivial vector bundle, we have $c_1(\varpi^* E)\,=\, 0$.
As $\varpi$ is a finite covering map, this implies that $c_1(E)\,=\, 0$.
Therefore, the second statement in the theorem holds.

To prove the converse, assume that $E$ is semiample and $c_1(E)\,=\, 0$.
The vector bundle $E$ is nef because it is semiample. This implies that
the exterior product $\bigwedge^{r-1}E$ is nef, where
$r\,=\, {\rm rank}(E)$ \cite[p.~307, Proposition 1.14]{DPS}. Since $c_1(E)\,=\, 0$, the line bundle
$\bigwedge^{r}E^*$ is nef. Consequently, the vector bundle
$$
E^*\,\,=\,\, (\bigwedge\nolimits ^{r-1}E)\otimes (\bigwedge\nolimits ^{r}E^*)
$$
is nef. Thus the vector bundle $E$ is numerically flat \cite[p.~311, Definition 1.17]{DPS}. This implies that
$E$ is semistable \cite[p.~311, Theorem 1.18]{DPS}. Also, we have $c_i(E)\,=\,0$ for all $i\, \geq\, 1$
\cite[p.~311, Corollary 1.19]{DPS}.

Any semistable vector bundle $V$, defined over a complex irreducible smooth projective variety, with
$c_1(V)\,=\,0\,=\, c_2(V)$ has a canonical integrable algebraic connection \cite[p.~20, Corollary 1.3]{Si},
\cite[p.~40, Corollary 3.10]{Si} (this Corollary 3.10 shows that any semistable vector bundle $V$
on a smooth projective variety with $c_1(V)\,=\,0\,=\, c_2(V)$ satisfies the condition in
Corollary 1.3). This canonical connection has the following properties:
\begin{enumerate}
\item The canonical connection on a trivial vector bundle is the trivial connection.

\item Canonical connections on $V\oplus W$ and $V\otimes W$ coincide with the connections
induced by the canonical connections on $V$ and $W$.

\item The canonical connection on $V^*$ coincides with the one induced the canonical connection on $V$.

\item When $W$ and $V$ have canonical connections, every homomorphism of coherent sheaves $V\, \longrightarrow
\, W$ is flat with respect to the canonical connections on $V$ and $W$.

\item If $f\, :\, M_1\, \longrightarrow\, M_2$ is a morphism between smooth complex projective varieties,
and $\nabla$ is the canonical connection on $V\, \longrightarrow\, M_2$, then $f^*V$ is semistable with
$c_1(f^*V)\,=\, 0\,=\, c_2(f^*V)$, and the canonical connection on $f^*V$ coincides with $f^*\nabla$.
\end{enumerate}
(The last property will not be needed here.)

Fix a positive integer $d$ such that ${\mathcal O}_{{\mathbb P}(E)}(d)$ is base point free.
(Such an integer exists because $E$ is semiample.)

Let $\nabla^0$ denote the canonical connection on $E$. The canonical connection on
$\text{Sym}^d(E)$ will be denoted by $\nabla$; so $\nabla$ is induced by $\nabla^0$. Let
$$
{\mathcal V}\,:=\, M\times H^0(M,\, \text{Sym}^d(E))\,\longrightarrow\, M
$$
be the trivial vector bundle over $M$ with fiber $H^0(M,\, \text{Sym}^d(E))$. Consider the evaluation map
\begin{equation}\label{e4}
\Phi\,\, :\,\, {\mathcal V} \,\, \longrightarrow\,\, \text{Sym}^d(E)
\end{equation}
that sends any $(x,\, v)\, \in\, M\times H^0(M,\, \text{Sym}^d(E))$ to $v(x)\, \in\,
\text{Sym}^d(E)_x$. Any section of $\text{Sym}^d(E)$ vanishing at a point actually vanishes identically.
Indeed, this is a consequence of the above first and fourth properties of the canonical connection; any
flat section vanishing at a point must vanish identically.
Hence ${\mathcal V}$ is a subbundle of $\text{Sym}^d(E)$ via $\Phi$ in \eqref{e4}.

The canonical connection $\nabla$ on $\text{Sym}^d(E)$ preserves the subbundle $\mathcal V$ and it
induces the trivial connection on $\mathcal V$. Fix a point $x_0\, \in\, M$, and consider the monodromy
representation
$$
\rho\, :\, \pi_1(M,\, x_0)\, \longrightarrow\, \text{GL}(E_{x_0}).
$$
{}From Lemma \ref{lem1} we conclude that the image of $\rho$ is a finite subgroup of $\text{GL}(E_{x_0})$.
This implies that $E$ is \'etale trivial.
\end{proof}

\subsection{A lemma on symmetric products}

Let $V$ be a finite dimensional complex vector space. Fix a positive integer $d$, and consider the
natural action of $\text{GL}(V)$ on $\text{Sym}^d(V)$ given by the standard action of $\text{GL}(V)$
on $V$. Let
$$
W\, \, \subset\,\, \text{Sym}^d(V)
$$
be a linear subspace such that the partial linear system $P(W)\, \subset\, P(\text{Sym}^d(V))\,=\,
\big\vert{\mathcal O}_{{\mathbb P}(V)}(d)\big\vert$ on ${\mathbb P}(V)$ is base point free; this
means that for every $z\, \in\, {\mathbb P}(V)$ there is a $s_z\, \in\, W$ such that
$s_z(z) \,\in\, {\mathcal O}_{{\mathbb P}(V)}(d)_z$ is nonzero. Let
$$
H\,\, \subset\,\, \text{GL}(V)
$$
be a subgroup that fixes $W$ pointwise.

\begin{lemma}\label{lem1}
The subgroup $H$ is a finite group.
\end{lemma}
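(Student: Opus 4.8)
The plan is to reduce the statement to a question about algebraic group actions on $\mathbb{P}(V)$. First I would replace $H$ by its Zariski closure $\overline{H}$ in $\text{GL}(V)$: for each $w\, \in\, W$ the condition $g\cdot w\,=\, w$ cuts out a Zariski closed subset of $\text{GL}(V)$, so $\overline{H}$ also fixes $W$ pointwise, and since $H\, \subseteq\, \overline{H}$ it suffices to prove that the algebraic group $\overline{H}$ is finite, i.e. that $\dim \overline{H}\,=\, 0$ (a zero-dimensional closed subgroup of $\text{GL}(V)$ is a finite set of points). I may assume $\dim V\, \geq\, 1$, so that $W\, \neq\, \{0\}$ since $P(W)$ is base point free on the nonempty space $\mathbb{P}(V)$.

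The key step is to record how $W$ interacts with the action. The natural action of $\text{GL}(V)$ on $V$ linearizes to $\mathcal{O}_{\mathbb{P}(V)}(d)$ covering the action on $\mathbb{P}(V)$; for $g\, \in\, \text{GL}(V)$ write $\sigma_g\, :\, \mathbb{P}(V)\, \to\, \mathbb{P}(V)$ for the action and $\ell_g$ for the induced isomorphism of fibers $\mathcal{O}_{\mathbb{P}(V)}(d)_z\, \to\, \mathcal{O}_{\mathbb{P}(V)}(d)_{\sigma_g(z)}$, so that the induced action on $H^0(\mathbb{P}(V),\, \mathcal{O}_{\mathbb{P}(V)}(d))\,=\, \text{Sym}^d(V)$ satisfies $(g\cdot s)(\sigma_g(z))\,=\, \ell_g(s(z))$. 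Because $P(W)$ is base point free, the evaluation maps $\text{ev}_z\, :\, W\, \to\, \mathcal{O}_{\mathbb{P}(V)}(d)_z$ are surjective for every $z$, and they define a morphism
$$
\phi\, :\, \mathbb{P}(V)\, \longrightarrow\, \mathbb{P}(W),\qquad z\, \longmapsto\, [\text{ev}_z],
$$
with $\phi^*\mathcal{O}_{\mathbb{P}(W)}(1)\,=\, \mathcal{O}_{\mathbb{P}(V)}(d)$. Now if $g\, \in\, H$, then $g\cdot s\,=\, s$ for all $s\, \in\, W$, and the displayed compatibility gives $\text{ev}_{\sigma_g(z)}\,=\, \ell_g\circ \text{ev}_z$; as $\ell_g$ is an isomorphism of one-dimensional spaces it only rescales the quotient, so $[\text{ev}_{\sigma_g(z)}]\,=\, [\text{ev}_z]$ in $\mathbb{P}(W)$. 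Hence $\phi\circ \sigma_g\,=\, \phi$ for every $g\, \in\, H$, and therefore for every $g\, \in\, \overline{H}$ as well.

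I expect this invariance of $\phi$ to be the main obstacle: although $H$ fixes every section in $W$, the fibers of $\mathcal{O}_{\mathbb{P}(V)}(d)$ are twisted by $\ell_g$, and the content of the step is that this twist is exactly what is killed by projectivizing. Once $\phi\circ \sigma_g\,=\, \phi$ is in hand, the rest is routine. Since $\mathcal{O}_{\mathbb{P}(V)}(d)\,=\, \phi^*\mathcal{O}_{\mathbb{P}(W)}(1)$ is ample, $\phi$ has finite fibers, for it cannot contract a curve $C$ (on which $\mathcal{O}_{\mathbb{P}(V)}(d)$ would restrict trivially, contradicting positivity of its degree). For any $z\, \in\, \mathbb{P}(V)$ the orbit $\overline{H}^0\cdot z$ is the image of the connected group $\overline{H}^0$, hence connected, and it lies in the finite set $\phi^{-1}(\phi(z))$; a connected subset of a finite set is a single point, so $\overline{H}^0$ fixes every point of $\mathbb{P}(V)$, forcing every element of $\overline{H}^0$ to be a scalar. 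Finally, $\lambda\cdot \text{Id}_V$ acts on $\text{Sym}^d(V)$ as multiplication by $\lambda^d$, so a scalar fixing the nonzero space $W$ pointwise satisfies $\lambda^d\,=\, 1$; thus the scalars in $\overline{H}$ form a finite group, which forces the connected group $\overline{H}^0$ to be trivial. Therefore $\dim \overline{H}\,=\, 0$, so $\overline{H}$ is finite and hence so is $H$.
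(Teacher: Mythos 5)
Your proof is correct and follows essentially the same route as the paper: both arguments hinge on the finite morphism $\mathbb{P}(V)\,\longrightarrow\,\mathbb{P}(W)$ defined by the base-point-free system $P(W)$, the observation that fixing $W$ pointwise forces this morphism to be invariant under the group (projectivization killing the fiberwise twist), and the fact that a scalar fixing a nonzero element of $\mathrm{Sym}^d(V)$ satisfies $\lambda^d\,=\,1$. The only organizational difference is that you pass to the Zariski closure in $\mathrm{GL}(V)$ and argue with its identity component, whereas the paper splits $H$ as an extension of its image $\widetilde{H}\,\subset\,\mathrm{PGL}(V)$ by $H\cap\mathbb{C}^*$; your connectedness argument (orbits of $\overline{H}^0$ lie in finite fibers, hence are points) is in fact the rigorous justification of the paper's terse claim that $\widetilde{H}$ is finite because $\varphi$ is finite and $\mathrm{PGL}(V)$ acts faithfully.
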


\begin{proof}
Let
$$
\varphi\, :\, {\mathbb P}(V)\, \longrightarrow\, {\mathbb P}(W)
$$
be the natural map that sends any $x\, \in\, {\mathbb P}(V)$ to the hyperplane in $W$ given by
all sections that vanish at $x$. This map is well-defined because the partial linear system
$P(W)$ is base point free. Note that $\varphi$ is a finite morphism because
$\varphi^* {\mathcal O}_{{\mathbb P}(W)}(1)\,=\, {\mathcal O}_{{\mathbb P}(V)}(d)$.

Let $$\widetilde{H}\, \subset\, \text{PGL}(V)$$ be the image of
$H$ under the quotient map
$$
\text{GL}(V)\,\longrightarrow\, \text{GL}(V)/{\mathbb C}^*\,=:\, \text{PGL}(V).
$$
The map $\varphi$ is evidently equivariant for the actions of $\widetilde H$ on ${\mathbb P}(V)$
and ${\mathbb P}(W)$. Since $\widetilde H$ acts trivially on ${\mathbb P}(W)$, and $\varphi$ is
a finite morphism, the group $\widetilde H$ is finite (note that the action of
$\text{PGL}(V)$ on ${\mathbb P}(V)$ is faithful).

Next note that
$$
H\cap ({\mathbb C}\setminus\{0\})\,\,\subset\,\, \text{GL}(V)
$$
is a finite group because it fixes some nonzero element of $\text{Sym}^d(V)$. Thus
$H$ is an extension of the finite group $\widetilde H$ by the finite group $H\cap ({\mathbb C}\setminus\{0\})$.
Consequently, $H$ is a finite group.
\end{proof}

\section{Splitting of \'etale trivial quotients}

As before, $M$ is a connected smooth complex projective variety.
Let
\begin{equation}\label{e12}
0\, \longrightarrow\, V \, \longrightarrow\, E \, \stackrel{p}{\longrightarrow}\,
Q \, \longrightarrow\, 0
\end{equation}
be a short exact sequence of vector bundles on $M$. This exact sequence is called split if there
is an algebraic homomorphism $\varphi\, :\, Q\, \longrightarrow\, E$ such that $p\circ\varphi\,=\, {\rm Id}_Q$.
If \eqref{e12} is split, then $E\,=\, V\oplus Q$.

\begin{lemma}\label{lem3}
In \eqref{e12}, assume that $E$ is semiample and $Q\,=\, {\mathcal O}_M$. Then the following
two hold:
\begin{enumerate}
\item The sequence in \eqref{e12} splits.

\item $V$ in \eqref{e12} is semiample.
\end{enumerate}
\end{lemma}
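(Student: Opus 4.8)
The plan is to reduce the whole statement to producing a single section of $E$ and to extract that section from a symmetric power by an explicit contraction. First I would reinterpret the data: since $Q\,=\,\mathcal{O}_M$, the quotient map $p\,:\,E\,\to\,\mathcal{O}_M$ is the same as a nowhere-vanishing section $p\,\in\,H^0(M,\,E^{*})$, and splitting \eqref{e12} is equivalent to finding $s\,\in\,H^0(M,\,E)$ with $p(s)\,=\,1\,\in\,H^0(M,\,\mathcal{O}_M)$; such an $s$ defines $\varphi\,:\,\mathcal{O}_M\,\to\,E$ by $1\,\mapsto\,s$ with $p\circ\varphi\,=\,\mathrm{Id}$. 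I also record that $\mathrm{Sym}^d(p)\,:\,\mathrm{Sym}^d E\,\to\,\mathrm{Sym}^d\mathcal{O}_M\,=\,\mathcal{O}_M$ is surjective for every $d\,\geq\,1$.

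Next I would feed in semiampleness. Since $E$ is semiample, $\mathcal{O}_{\mathbb{P}(E)}(d)$ is base point free for some $d\,\geq\,1$, equivalently $\mathrm{Sym}^d E$ is globally generated (recall $\pi_*\mathcal{O}_{\mathbb{P}(E)}(d)\,=\,\mathrm{Sym}^d E$). Evaluating global generation at one point $x_0\,\in\,M$ produces $s_0\,\in\,H^0(M,\,\mathrm{Sym}^d E)$ whose image $\mathrm{Sym}^d(p)(s_0)\,\in\,H^0(M,\,\mathcal{O}_M)\,=\,\mathbb{C}$ is nonzero; being a constant, it is a nowhere-vanishing section of $\mathcal{O}_M$. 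Geometrically, $p$ corresponds to a section $\sigma\,:\,M\,\to\,\mathbb{P}(E)$ with $\sigma^*\mathcal{O}_{\mathbb{P}(E)}(1)\,=\,\mathcal{O}_M$, and $s_0$ is a section of $\mathcal{O}_{\mathbb{P}(E)}(d)$ not vanishing at a point of $\sigma(M)$.

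The crucial step is to descend from $\mathrm{Sym}^d E$ back down to $E$. Contraction against $p\,\in\,H^0(M,\,E^{*})$ defines an $\mathcal{O}_M$-linear morphism $\delta_p\,:\,\mathrm{Sym}^k E\,\to\,\mathrm{Sym}^{k-1}E$ for each $k$, namely the polarization map with $\delta_p(e_1\cdots e_k)\,=\,\sum_i p(e_i)\,e_1\cdots\widehat{e_i}\cdots e_k$. A short computation on a pure power $e^d$ shows that the composite $\mathrm{Sym}^d E\,\xrightarrow{\delta_p^{\,d-1}}\,E\,\xrightarrow{\,p\,}\,\mathcal{O}_M$ equals $d!\cdot\mathrm{Sym}^d(p)$, so $s\,:=\,\delta_p^{\,d-1}(s_0)\,\in\,H^0(M,\,E)$ satisfies $p(s)\,=\,d!\cdot\mathrm{Sym}^d(p)(s_0)\,\neq\,0$; rescaling gives $p(s)\,=\,1$, which proves the first assertion. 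The second assertion is then immediate: once \eqref{e12} splits, $E\,\cong\,V\oplus\mathcal{O}_M$, so $V$ is a quotient of $E$, and a quotient of a semiample bundle is semiample; hence $V$ is semiample.

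I expect the contraction step to be the real obstacle, because a section of $\mathrm{Sym}^d E$ surjecting onto the trivial quotient only splits $\mathrm{Sym}^d(p)$, which is strictly weaker than splitting $p$ itself; for instance the Atiyah bundle on an elliptic curve has all symmetric-power sections vanishing along $\sigma(M)$ and fails to be semiample, so semiampleness must be used essentially. The map $\delta_p^{\,d-1}$ is precisely what converts the degree-$d$ nonvanishing into a degree-$1$ section, and the one point needing care is the identity $p\circ\delta_p^{\,d-1}\,=\,d!\cdot\mathrm{Sym}^d(p)$, which guarantees nonvanishing is transported intact; the rest is the standard dictionary between $\pi_*\mathcal{O}_{\mathbb{P}(E)}(d)\,=\,\mathrm{Sym}^d E$ and base point freeness versus global generation.
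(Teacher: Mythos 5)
Your proof is correct, and it takes a genuinely different route from the paper's. The paper argues cohomologically: it forms the pushout of $0 \to {\mathcal K}(d) \to \mathrm{Sym}^d(E) \to {\mathcal O}_M \to 0$ along ${\mathcal K}(d) \to V$ (the diagram \eqref{q2}), verifies by a \v{C}ech cocycle computation that the resulting extension of ${\mathcal O}_M$ by $V$ has class $d\cdot\theta$, splits that pushed-out extension using the same section you call $s_0$, and concludes $\theta = 0$ because $H^1(M,V)$ is a ${\mathbb C}$-vector space. You instead build the splitting of $p$ itself, explicitly, by iterated contraction against $p \in H^0(M, E^*)$, via the fiberwise identity $p \circ \delta_p^{\,d-1} = d!\cdot \mathrm{Sym}^d(p)$ (correct: on a pure power, $\delta_p^{\,d-1}(e^d) = d!\,p(e)^{d-1}e$, and pure powers span in characteristic zero). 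Both proofs consume exactly the same geometric input --- a section of ${\mathcal O}_{{\mathbb P}(E)}(d)$ whose restriction to the section $\sigma: M \to {\mathbb P}(E)$ determined by $p$ is nonzero, hence a nonzero constant --- but your contraction replaces the pushout-plus-extension-class bookkeeping by a one-line algebraic identity, and it produces the splitting section of $E$ explicitly rather than showing that an obstruction class is killed by multiplication by $d$. Your closing remark (the Atiyah bundle shows a section of $\mathrm{Sym}^d(E)$ hitting the trivial quotient cannot exist without semiampleness) correctly locates where the hypothesis enters; the paper's proof uses it at the identical spot.

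One inaccuracy should be repaired, though it does not damage the argument. Base point freeness of ${\mathcal O}_{{\mathbb P}(E)}(d)$ is \emph{not} equivalent to global generation of $\mathrm{Sym}^d(E)$; only the implication from global generation to base point freeness holds. For a counterexample take $E = {\mathcal O}_M \oplus L$ with $L$ a nontrivial $2$-torsion line bundle: the two sections coming from ${\mathcal O}_M^{\otimes 2}$ and $L^{\otimes 2}$ make ${\mathcal O}_{{\mathbb P}(E)}(2)$ base point free, yet the summand $L \subset \mathrm{Sym}^2(E)$ is not generated. So your sentence ``evaluating global generation at one point $x_0$'' invokes something stronger than what semiampleness gives. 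The fix is already in your own next sentence: base point freeness at the single point $\sigma(x_0) \in {\mathbb P}(E)$ yields $s_0 \in H^0(M, \mathrm{Sym}^d(E))$ with $\mathrm{Sym}^d(p)(s_0)(x_0) \neq 0$, since the value of $s_0$ at $\sigma(x_0)$ is computed through $\sigma^*{\mathcal O}_{{\mathbb P}(E)}(d) = {\mathcal O}_M$ precisely by $\mathrm{Sym}^d(p)$. With that substitution the proof is complete.
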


\begin{proof}
Let
\begin{equation}\label{e11b}
\theta\,\,\in\,\, H^1(M,\, \text{Hom}({\mathcal O}_M,\, V))\,\,=\,\, H^1(M,\, V)
\end{equation}
be the extension class for \eqref{e12}. To prove that \eqref{e12} splits we need to show that $\theta\,=\, 0$.

Fix a positive integer $d$ such that the line bundle ${\mathcal O}_{{\mathbb P}(E)}(d)$ is generated by
its global sections. Let
\begin{equation}\label{e10}
{\mathcal K}(d)\,:=\, \text{kernel}(\text{Sym}^d(p))\, \subset\, \text{Sym}^d(E)
\end{equation}
be the kernel of the homomorphism $\text{Sym}^d(p)\, :\, \text{Sym}^d(E) \, \longrightarrow\,
\text{Sym}^d(Q)\,=\, {\mathcal O}_M$ given by $p$ in \eqref{e12}. Note that ${\mathcal K}(d)$ in \eqref{e10}
is the image of $V\otimes \text{Sym}^{d-1}(E)$ in $\text{Sym}^d(E)$ by the natural map. So
$V\otimes \text{Sym}^{d-1}(Q)\,=\, V$ is a quotient of ${\mathcal K}(d)$. Let
$$\text{Sym}^d(E)\,\, \longrightarrow\,\, {\mathcal V}$$ be the quotient of $\text{Sym}^d(E)$ by the
kernel of the above quotient map ${\mathcal K}(d)\, \longrightarrow\,
V$. Hence we have the following commutative diagram:
\begin{equation}\label{q2}
\begin{matrix}
0 & \longrightarrow & {\mathcal K}(d) & \longrightarrow & \text{Sym}^d(E)
& \xrightarrow{\,\,\,\text{Sym}^d(p)\,\,\,} & {\mathcal O}_M & \longrightarrow & 0\\
&&\,\,\, \Big\downarrow \widetilde{\beta} &&\,\,\, \Big\downarrow\beta && \Vert\\
0 & \longrightarrow & V\otimes \text{Sym}^{d-1}(Q)\,=\, V & \longrightarrow & {\mathcal V}
& \longrightarrow & {\mathcal O}_M & \longrightarrow & 0
\end{matrix}
\end{equation}
where ${\mathcal K}(d)$ is defined in \eqref{e10} and $\beta$ is the quotient map while
$\widetilde\beta$ is the restriction of $\beta$. The extension class for the bottom exact
sequence in \eqref{q2} is $d\cdot \theta$, where $\theta$ is the class in \eqref{e11b}. To see this,
take a covering $\{{\mathcal U}_i\}_{i\in I}$ of $M$ by affine open subsets and also take
$s_i\, \in\, H^0({\mathcal U}_i,\, E\big\vert_{{\mathcal U}_i})$, for all $i\, \in\, I$, such that
$p(s_i)$ is the constant function $1$ on ${\mathcal U}_i$, where $p$ is the
projection in \eqref{e12}. The cohomology class $\theta$ in \eqref{e11b}
is given by the $1$-cocycle defined by
$$
s_i-s_j\,\in\, H^0({\mathcal U}_i\cap {\mathcal U}_j,\, V\big\vert_{{\mathcal U}_i\cap {\mathcal U}_j}),\ \ \,
i,\, j\, \in\, I.
$$
We have
$$
\beta(s^{\otimes d}_i) - \beta(s^{\otimes d}_j) \,=\, \beta((s_j+(s_i-s_j))^{\otimes d}) - \beta(s^{\otimes d}_j)
$$
$$
\widetilde{\beta}((s_j+(s_i-s_j))^{\otimes d} - \beta(s^{\otimes d}_j)) \,=\, 
d\cdot p(s_j)^{\otimes (d-1)}(s_i-s_j)\,=\, d\cdot (s_i-s_j),
$$
where $\beta$ and $\widetilde\beta$ are the homomorphisms in \eqref{q2}.
{}From this it follows immediately that the extension class for the bottom exact
sequence in \eqref{q2} is $d\cdot \theta$.

Since the line bundle ${\mathcal O}_{{\mathbb P}(E)}(d)$ is generated by
its global sections, there is a section
$$
\sigma\,\, \in\,\, H^0({\mathbb P}(E),\, {\mathcal O}_{{\mathbb P}(E)}(d))
$$
such that $\text{Sym}^d(p)(\sigma)\, \in\, H^0(M\,\, {\mathcal O}_M)$ is a nonzero section,
where $\text{Sym}^d(p)$ is the projection in \eqref{q2}. This implies that the section
$\text{Sym}^d(p)(\sigma)$ is nowhere vanishing. Consequently, the homomorphism
$$
{\mathcal O}_M\, \longrightarrow\, {\mathcal V},\ \ \, f\, \longrightarrow\,f\cdot \beta(\sigma)
$$
gives a splitting of the bottom exact sequence in \eqref{q2}. Since
the extension class for the bottom exact sequence in
\eqref{q2} is $d\cdot \theta$, where $\theta$ is the class in \eqref{e11b}, we conclude that
$d\cdot\theta\,=\, 0$. Hence $\theta\,=\, 0$ and \eqref{e12} splits.

Since \eqref{e12} splits, the vector bundle $V$ in \eqref{e12} is a quotient of $E$. Now
$V$ is semiample because $E$ is so.
\end{proof}

The following is derived using Lemma \ref{lem3}.

\begin{corollary}\label{cor1}
In \eqref{e12}, assume that $E$ is semiample and $Q\,=\, {\mathcal O}^{\oplus r}_M$.
Then the sequence in \eqref{e12} splits.
\end{corollary}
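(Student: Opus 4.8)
The plan is to reduce the statement to the rank-one case settled in Lemma~\ref{lem3}, by testing the quotient $Q\,=\,\mathcal{O}_M^{\oplus r}$ against one linear functional at a time. Since $\mathcal{O}_M^{\oplus r}$ is free, a splitting of \eqref{e12} is the same thing as a bundle homomorphism $s\,:\,\mathcal{O}_M^{\oplus r}\,\longrightarrow\, E$ with $p\circ s\,=\,\mathrm{Id}$, and such an $s$ is determined by the global sections $s(e_1),\ldots,s(e_r)\,\in\, H^0(M,\,E)$, where $e_1,\ldots,e_r$ is the standard basis of $H^0(M,\,\mathcal{O}_M^{\oplus r})\,=\,\mathbb{C}^r$. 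The condition $p\circ s\,=\,\mathrm{Id}$ says exactly that $p(s(e_j))\,=\,e_j$ for every $j$. Hence \eqref{e12} splits if and only if the induced map on global sections $H^0(p)\,:\, H^0(M,\,E)\,\longrightarrow\, H^0(M,\,\mathcal{O}_M^{\oplus r})\,=\,\mathbb{C}^r$ is surjective, and the entire problem reduces to proving this surjectivity.

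To prove surjectivity I will invoke Lemma~\ref{lem3} repeatedly. Fix any nonzero linear functional $\ell\,\in\,(\mathbb{C}^r)^*$ and let $\widetilde{\ell}\,:\,\mathcal{O}_M^{\oplus r}\,\longrightarrow\,\mathcal{O}_M$ be the corresponding surjection of bundles; on global sections $\widetilde\ell$ induces $\ell$. Then $\widetilde{\ell}\circ p\,:\, E\,\longrightarrow\,\mathcal{O}_M$ is a surjection with $E$ semiample and quotient $\mathcal{O}_M$, so Lemma~\ref{lem3} applies to the short exact sequence $0\to \ker(\widetilde\ell\circ p)\to E\to \mathcal{O}_M\to 0$ and shows that it splits. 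A splitting yields a global section $w\,\in\, H^0(M,\,E)$ with $(\widetilde\ell\circ p)(w)\,=\,1$, i.e.\ $\ell(p(w))\,=\,1$. In particular, writing $W\,:=\,\mathrm{Im}(H^0(p))\,\subset\,\mathbb{C}^r$, the subspace $W$ is not contained in the hyperplane $\ker\ell$.

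The argument then closes with the elementary observation that a linear subspace of $\mathbb{C}^r$ contained in no hyperplane must be all of $\mathbb{C}^r$ (a proper subspace lies inside some $\ker\ell$). Since the preceding step produces, for every nonzero $\ell$, a section with $\ell(p(w))\neq 0$, the subspace $W$ avoids every hyperplane, so $W\,=\,\mathbb{C}^r$ and $H^0(p)$ is surjective; choosing preimages of $e_1,\ldots,e_r$ then furnishes the splitting $s$. I expect the only delicate point to be the bookkeeping in the first paragraph---recording cleanly that splitting \eqref{e12} is equivalent to surjectivity of $H^0(p)$, which rests on the freeness of $Q\,=\,\mathcal{O}_M^{\oplus r}$---after which all of the geometric content is supplied by Lemma~\ref{lem3}. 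An equivalent route is an induction on $r$: split off one trivial summand using Lemma~\ref{lem3}, which also leaves the kernel semiample, reduce to $0\to V\to E'\to \mathcal{O}_M^{\oplus(r-1)}\to 0$ with $E'$ semiample, and recurse; the functional-by-functional version above is just the non-inductive repackaging of this idea.
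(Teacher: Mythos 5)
Your proof is correct, and its main route is genuinely different from the paper's. The paper argues by induction on $r$: it applies Lemma \ref{lem3} to $E$ to split off one trivial summand, $E\,=\,V(r-1)\oplus{\mathcal O}_M$, and then---crucially using part (2) of Lemma \ref{lem3}, that the kernel $V(r-1)$ is again semiample---repeats the argument on $V(r-1)$, peeling off one copy of ${\mathcal O}_M$ at each of $r$ steps. Your functional-by-functional argument instead applies Lemma \ref{lem3} only to $E$ itself, once for each surjection $\widetilde{\ell}\circ p\,:\,E\,\longrightarrow\,{\mathcal O}_M$ with $\ell\,\in\,({\mathbb C}^r)^*\setminus\{0\}$, and never needs the semiampleness of any kernel: part (1) of the lemma alone produces, for each hyperplane $\ker\ell\,\subset\,{\mathbb C}^r$, an element of $W\,=\,{\rm Im}(H^0(p))$ outside that hyperplane, whence $W\,=\,{\mathbb C}^r$ and $H^0(p)$ is surjective. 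Your reduction of splitting to surjectivity of $H^0(p)$ is valid precisely because $Q$ is free and $H^0(M,\,{\mathcal O}_M)\,=\,{\mathbb C}$ (as $M$ is connected and projective), so sections of ${\mathcal O}^{\oplus r}_M$ are constant and a homomorphism ${\mathcal O}^{\oplus r}_M\,\longrightarrow\,E$ is determined by the images of the constant basis sections. One point you handled correctly that deserves emphasis: using only the $r$ coordinate functionals would not suffice, since the resulting matrix of images $p(w_j)$ could be singular; quantifying over all nonzero $\ell$ and invoking the fact that a proper subspace of ${\mathbb C}^r$ lies in some hyperplane is what closes the argument. In summary, your route trades the paper's induction (and its reliance on Lemma \ref{lem3}(2)) for elementary linear algebra on global sections; both are complete proofs, and the inductive variant you sketch at the end is exactly the paper's own argument.
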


\begin{proof}
Since Lemma \ref{lem3} covers the case of $r\,=\, 1$, we assume that $r\, \geq\, 2$.
Fix a filtration of subbundles of ${\mathcal O}^{\oplus r}_M$:
\begin{equation}\label{e13}
0\,=\, {\mathcal O}^{\oplus 0}_M\, \subset\, {\mathcal O}^{\oplus 1}_M\, \subset\, 
{\mathcal O}^{\oplus 2}_M\, \subset\, \cdots\, \subset\, 
{\mathcal O}^{\oplus (r-1)}_M\, \subset\, {\mathcal O}^{\oplus r}_M.
\end{equation}
For $0\, \leq\, j\, \leq\, r-1$, let
\begin{equation}\label{e14}
\Phi_j\,\, :\,\, {\mathcal O}^{\oplus r}_M\,\, \longrightarrow\,\, 
{\mathcal O}^{\oplus r}_M/{\mathcal O}^{\oplus j}_M
\end{equation}
be the quotient map, where ${\mathcal O}^{\oplus j}_M$ is the subbundle in \eqref{e13}.
The composition of homomorphisms
$$
E\,\, \stackrel{p}{\longrightarrow}\,\, Q\,\,=\,\, {\mathcal O}^{\oplus r}_M \,\,
\stackrel{\Phi_j}{\longrightarrow}\,\, {\mathcal O}^{\oplus r}_M/{\mathcal O}^{\oplus j}_M,
$$
where $p$ and $\Phi_j$ are the projections in \eqref{e12} and \eqref{e14} respectively,
will be denoted by $\Psi_j$. Let
\begin{equation}\label{e15}
V(j)\,\, :=\,\, \text{kernel}(\Psi_j) \,\, \subset\,\, E
\end{equation}
be the kernel of this composition of homomorphisms.

{}From Lemma \ref{lem3} we know that $E\,=\, V(r-1)\oplus {\mathcal O}_M$ and $V(r-1)$ is
semiample. So Lemma \ref{lem3} says that $V(r-1)\,=\, V(r-2)\oplus {\mathcal O}_M$ and $V(r-2)$ is
semiample. Now we apply Lemma \ref{lem3} to $V(r-2)$ and keep iterating. After $r$ steps we
obtain that $E\,=\, V\oplus {\mathcal O}^{\oplus r}_M$ and the sequence in \eqref{e12} splits.
\end{proof}

\begin{proposition}\label{prop2}
In \eqref{e12}, assume that $E$ is semiample and $c_1(Q)\,=\, 0$.
Then the sequence in \eqref{e12} splits.
\end{proposition}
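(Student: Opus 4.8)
The plan is to reduce to Corollary \ref{cor1} by passing to a finite \'etale cover that trivializes $Q$, and then to descend the resulting splitting back down to $M$.

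First I would observe that $Q$ is itself semiample: it is a quotient of the semiample bundle $E$, and quotients of semiample bundles are semiample (as recorded in Section \ref{se2.1}). Combined with the hypothesis $c_1(Q)\,=\, 0$, Theorem \ref{thm1} then shows that $Q$ is \'etale trivial. Hence there is a finite \'etale covering map $\varpi\, :\, Y\, \longrightarrow\, M$ for which $\varpi^* Q$ is trivial, say $\varpi^* Q\,\cong\, {\mathcal O}^{\oplus r}_Y$ with $r\,=\, \text{rank}(Q)$; after replacing $Y$ by a further covering if necessary, I may assume $\varpi$ is Galois with group $\Gamma$ of order $n\,=\, \#\Gamma$.

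Next I would pull the exact sequence \eqref{e12} back along $\varpi$, obtaining
$$
0\, \longrightarrow\, \varpi^* V\, \longrightarrow\, \varpi^* E\, \longrightarrow\, \varpi^* Q\,=\, {\mathcal O}^{\oplus r}_Y\, \longrightarrow\, 0.
$$
Here $\varpi^* E$ is again semiample: the induced map ${\mathbb P}(\varpi^* E)\, \longrightarrow\, {\mathbb P}(E)$ pulls ${\mathcal O}_{{\mathbb P}(E)}(1)$ back to ${\mathcal O}_{{\mathbb P}(\varpi^* E)}(1)$, and the pullback of a globally generated power of ${\mathcal O}_{{\mathbb P}(E)}(1)$ remains globally generated. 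Corollary \ref{cor1} therefore applies to the pulled-back sequence and shows that it splits. In terms of the extension class $\theta\, \in\, H^1(M,\, \text{Hom}(Q,\, V))\,=\, \text{Ext}^1(Q,\, V)$ of \eqref{e12}, this splitting over $Y$ says precisely that $\varpi^*\theta\,=\, 0$ in $H^1(Y,\, \text{Hom}(\varpi^* Q,\, \varpi^* V))$, because the extension class of a pulled-back sequence is the pullback of the extension class.

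The remaining, and genuinely essential, step is the descent: I must deduce $\theta\,=\, 0$ from $\varpi^*\theta\,=\, 0$, and this is where I expect the only real content to lie. Since $\varpi$ is finite \'etale of degree $n$ and we work over $\mathbb C$, the pullback map $\varpi^*\, :\, H^1(M,\, {\mathcal H})\, \longrightarrow\, H^1(Y,\, \varpi^* {\mathcal H})$ is injective for every coherent sheaf ${\mathcal H}$ on $M$: the trace map attached to $\varpi$ gives a left inverse up to the factor $n$, and multiplication by $n$ is invertible on these $\mathbb C$--vector spaces. Equivalently, using the Galois group, $H^1(Y,\, \varpi^* {\mathcal H})^\Gamma\,=\, H^1(M,\, {\mathcal H})$, because $\varpi$ is affine and averaging over $\Gamma$ is exact in characteristic zero. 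Applying this with ${\mathcal H}\,=\, \text{Hom}(Q,\, V)$ yields $\theta\,=\, 0$, so the sequence \eqref{e12} splits, as desired.
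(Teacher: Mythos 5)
Your proof is correct, and its skeleton is the same as the paper's: quotients of semiample bundles are semiample, so Theorem \ref{thm1} makes $Q$ \'etale trivial; pull \eqref{e12} back to a finite \'etale cover trivializing $Q$, note $\varpi^*E$ is still semiample, and invoke Corollary \ref{cor1} to split the pulled-back sequence. Where you diverge is the descent step. The paper stays at the level of homomorphisms: it takes the splitting $\varphi\,:\,\varpi^*Q\,\longrightarrow\,\varpi^*E$, forms the conjugates $\varphi^\gamma$ under the Galois action, and averages $\widehat{\varphi}\,=\,\frac{1}{\#\Gamma}\sum_{\gamma\in\Gamma}\varphi^\gamma$ to get a $\Gamma$--equivariant splitting that descends to $M$. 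You instead phrase everything through the extension class $\theta\,\in\,H^1(M,\,\mathrm{Hom}(Q,\,V))$: splitting over $Y$ means $\varpi^*\theta\,=\,0$, and the trace map for the finite \'etale morphism $\varpi$ shows $\varpi^*$ is injective on coherent cohomology in characteristic zero, forcing $\theta\,=\,0$. The two descents are really the same characteristic-zero averaging principle in different clothing (the trace is averaging), but yours has a small advantage: it needs no Galois hypothesis on the cover at all, since injectivity of $\varpi^*$ on $H^1$ holds for any finite \'etale (indeed finite flat) $\varpi$ of degree invertible in the base field, so your ``replace $Y$ by a Galois cover'' step is actually superfluous. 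The paper's version, on the other hand, is more elementary and self-contained, producing the splitting over $M$ explicitly rather than through the identification of $\mathrm{Ext}^1$ with $H^1$ and the functoriality of extension classes (both of which you correctly use, and which are standard).
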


\begin{proof}
Since $E$ is semiample and $Q$ is a quotient of $E$, it follows that $Q$ is semiample. So
Theorem \ref{thm1} says that $Q$ is \'etale trivial.

Let
$$
\varpi\, :\, Y\, \longrightarrow\, M
$$
be an \'etale Galois covering such that $\varpi^* Q$ is a trivial vector bundle. Consider
the exact sequence
\begin{equation}\label{e11}
0\, \longrightarrow\, \varpi^*V \, \longrightarrow\, \varpi^* E \, \xrightarrow{\,\,\,\varpi^*p\,\,\,}\,
\varpi^*Q \, \longrightarrow\, 0
\end{equation}
obtained by pulling back \eqref{e12} to $Y$. Note that $\varpi^* E$ is semiample because $E$ is so.
Hence Corollary \ref{cor1} says that \eqref{e11} splits. Fix a splitting homomorphism
\begin{equation}\label{e16}
\varphi\, \,:\,\, \varpi^*Q \,\, \longrightarrow\,\, \varpi^* E;
\end{equation}
so we have $(\varpi^* p)\circ\varphi\,=\, {\rm Id}_{\varpi^*Q}$.

The Galois group $\Gamma\,:=\, \text{Gal}(Y/M)$ acts on all vector bundles on $Y$ pulled back from $M$.
In particular, $\Gamma$ acts on both $\varpi^*E$ and $\varpi^*Q$. For any
$\gamma\, \in\, \Gamma$, let
$$
\varphi^\gamma \, \,:\,\, \varpi^*Q \,\, \longrightarrow\,\, \varpi^* E
$$
be the homomorphism given by the following composition of maps:
$$
\varpi^*Q \,\, \xrightarrow{\,\,\,\,\gamma\cdot\,\,\,}\,\, \varpi^* Q
\,\, \stackrel{\varphi}{\longrightarrow}\,\, \varpi^* E
\,\, \xrightarrow{\,\,\,\gamma^{-1}\cdot\,\,\,}\,\, \varpi^* E;
$$
here ``$\cdot$'' denotes the action of $\Gamma$. Note that each $\varphi^\gamma$ is a
splitting homomorphism for the exact sequence in \eqref{e11}, meaning
$\varpi^*p\circ\varphi^\gamma\,=\, {\rm Id}_{\varpi^*Q}$, where $\varpi^*p$ is the
projection in \eqref{e11}. It is evident that
$$
\widehat{\varphi}\,\,:=\, \frac{1}{\# \Gamma} \sum_{\gamma\in \Gamma}\varphi^\gamma
\, \,:\,\, \varpi^*Q \,\, \longrightarrow\,\, \varpi^* E
$$
is a $\Gamma$--equivariant splitting of \eqref{e11}. Hence $\widehat{\varphi}$ produces
a splitting of \eqref{e12}.
\end{proof}

\section{Virtually globally generated bundles}

Henceforth, $M$ will denote an irreducible smooth complex projective curve.

A vector bundle $E$ on $M$ is called \textit{virtually globally generated} if there is an
irreducible smooth complex projective curve $Y$, and a nonconstant morphism $\phi\, :\, Y\, \longrightarrow\, M$,
such that the vector bundle $\phi^*E$ is generated by its global sections. (Note that $\phi$ is allowed
to have ramifications.)

\begin{lemma}\label{lem2}
Let $E$ be a virtually globally generated vector bundle over the curve $M$. Then $E$ is semiample.
\end{lemma}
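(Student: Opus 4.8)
The plan is to exploit the fact that a nonconstant morphism $\phi\,:\,Y\,\longrightarrow\, M$ of smooth projective curves is automatically finite, flat and surjective, and to compare the tautological line bundles on ${\mathbb P}(\phi^*E)$ and ${\mathbb P}(E)$ by means of a norm map. Since ${\mathbb P}(\phi^*E)\,=\, {\mathbb P}(E)\times_M Y$, the natural projection
\[
\Phi\,:\, {\mathbb P}(\phi^*E)\,\longrightarrow\, {\mathbb P}(E)
\]
is the base change of $\phi$ along the structure morphism ${\mathbb P}(E)\,\longrightarrow\, M$; hence $\Phi$ is finite and flat of degree $d\,:=\,\deg\phi$, and it satisfies $\Phi^*{\mathcal O}_{{\mathbb P}(E)}(1)\,=\,{\mathcal O}_{{\mathbb P}(\phi^*E)}(1)$. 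As $\phi^*E$ is globally generated, the line bundle ${\mathcal O}_{{\mathbb P}(\phi^*E)}(1)$ is generated by its global sections $H^0(Y,\,\phi^*E)\,=\,H^0({\mathbb P}(\phi^*E),\,{\mathcal O}_{{\mathbb P}(\phi^*E)}(1))$.

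First I would recall the norm homomorphism attached to the finite flat morphism $\Phi$ (this is the norm of invertible sheaves along a finite flat morphism; see Grothendieck's EGA~II, Section 6.5): it gives $\mathrm{Nm}\,:\,\mathrm{Pic}({\mathbb P}(\phi^*E))\,\longrightarrow\,\mathrm{Pic}({\mathbb P}(E))$ with $\mathrm{Nm}(\Phi^*L)\,=\,L^{\otimes d}$ for every line bundle $L$ on ${\mathbb P}(E)$, together with a multiplicative norm on sections $s\,\mapsto\,\mathrm{Nm}(s)$ whose divisor is the pushforward $\Phi_*\,\mathrm{div}(s)$. In particular, taking $L\,=\,{\mathcal O}_{{\mathbb P}(E)}(1)$, the norm of a section of ${\mathcal O}_{{\mathbb P}(\phi^*E)}(1)$ is a section of ${\mathcal O}_{{\mathbb P}(E)}(d)$, and $\mathrm{Nm}(s)$ is nonzero at a point $x$ precisely when $s$ does not vanish at any point of the finite fiber $\Phi^{-1}(x)$.

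Next I would produce, for each $x\,\in\,{\mathbb P}(E)$, a single global section of ${\mathcal O}_{{\mathbb P}(\phi^*E)}(1)$ that is nonvanishing along the whole finite set $\Phi^{-1}(x)$. For each $y\,\in\,\Phi^{-1}(x)$ the sections vanishing at $y$ form a proper linear subspace of $H^0({\mathbb P}(\phi^*E),\,{\mathcal O}_{{\mathbb P}(\phi^*E)}(1))$, proper because ${\mathcal O}_{{\mathbb P}(\phi^*E)}(1)$ is base point free; since $\Phi^{-1}(x)$ is finite and the ground field is infinite, the union of these finitely many proper subspaces is not the whole space, so a section $s$ with $s(y)\,\neq\,0$ for all $y\,\in\,\Phi^{-1}(x)$ exists. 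Then $\mathrm{Nm}(s)\,\in\,H^0({\mathbb P}(E),\,{\mathcal O}_{{\mathbb P}(E)}(d))$ satisfies $\mathrm{Nm}(s)(x)\,\neq\,0$. As $x$ was arbitrary, ${\mathcal O}_{{\mathbb P}(E)}(d)$ is base point free; therefore ${\mathcal O}_{{\mathbb P}(E)}(1)$ is semiample, and so $E$ is semiample.

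The step I expect to be the main obstacle is the careful setup of the norm on sections and the identification $\mathrm{div}\,\mathrm{Nm}(s)\,=\,\Phi_*\,\mathrm{div}(s)$, which is what controls the non-vanishing locus; the remaining points (finiteness of the fibers, avoidance of finitely many hyperplanes, and base change for $\mathbb{P}(\,\cdot\,)$ and ${\mathcal O}(1)$) are formal. If one prefers to bypass the norm construction altogether, the same conclusion follows from the general principle that semiampleness of a line bundle descends along a finite surjective morphism, applied to $\Phi$ and ${\mathcal O}_{{\mathbb P}(E)}(1)$; I nevertheless find the explicit norm argument cleaner and self-contained.
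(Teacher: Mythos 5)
Your proof is correct, but it follows a genuinely different route from the paper's. The paper's argument is structural: it invokes the theorem of Biswas--Parameswaran \cite{BP} that a virtually globally generated bundle on a curve splits as $E \,=\, A \oplus B$ with $A$ ample and $B$ finite, then observes that $A$ is semiample, that $B$ is semiample by Theorem \ref{thm1} (finite $\Rightarrow$ \'etale trivial $\Rightarrow$ semiample), and that a direct sum of semiample bundles is semiample \cite{LN}. You instead prove directly that semiampleness descends along the finite flat morphism $\Phi\,:\, {\mathbb P}(\phi^*E) \,=\, {\mathbb P}(E)\times_M Y \,\longrightarrow\, {\mathbb P}(E)$, using the norm of sections: all the steps check out --- $\Phi$ is indeed finite flat of degree $d\,=\,\deg\phi$ since $\phi$ is a finite flat morphism of smooth curves; global generation of $\phi^*E$ does give global generation of ${\mathcal O}_{{\mathbb P}(\phi^*E)}(1)$; the fiberwise description of ${\rm Nm}(s)$ as the determinant of multiplication by $s$ on $(\Phi_*{\mathcal O}_{{\mathbb P}(\phi^*E)})\otimes k(x)$ shows ${\rm Nm}(s)(x)\,\neq\,0$ exactly when $s$ is nonvanishing on $\Phi^{-1}(x)$; and the avoidance of finitely many proper subspaces works over ${\mathbb C}$. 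What your approach buys: it is self-contained (modulo the EGA norm construction), it bypasses both the nontrivial structure theorem of \cite{BP} and Theorem \ref{thm1} entirely, it gives the effective statement that ${\mathcal O}_{{\mathbb P}(E)}(\deg\phi)$ is already base point free, and it actually proves the more general fact that semiampleness of a line bundle descends under finite flat surjective pullback, with the curve hypothesis used only to guarantee that $\phi$ is finite and flat. What the paper's approach buys: brevity given its citations, and coherence with the rest of the paper, since the same decomposition from \cite{BP} is reused in the proof of the converse (Proposition \ref{prop1}), which is where the real content of Theorem \ref{thm2} lies.
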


\begin{proof}
Since $E$ is virtually globally generated, it follows that
$$
E\,\, =\,\, A\oplus B,
$$
where $A$ is an ample vector bundle over $M$ and $B$ is a finite vector bundle over $M$ \cite[p.~40, Theorem 1.1]{BP}.

An ample vector bundle is clearly semiample. So $A$ is semiample. From Theorem \ref{thm1} we know that the finite
vector bundle $B$ is semiample. It is straightforward to check that the direct sum of two semiample vector bundles
is again semiample (see \cite[p.~785, Lemma 2.1]{LN} for a proof). Consequently, the vector bundle
$E\,=\, A\oplus B$ is semiample.
\end{proof}

The following is a converse of Lemma \ref{lem2}.

\begin{proposition}\label{prop1}
Let $E$ be a semiample vector bundle over $M$. Then $E$ is virtually globally generated.
\end{proposition}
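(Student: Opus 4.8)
The plan is to show that a semiample $E$ on the curve $M$ splits as a direct sum $A \oplus B$, where $A$ is ample and $B$ is finite, and then to invoke the characterization of virtually globally generated bundles on curves given by \cite[p.~40, Theorem 1.1]{BP}, which was already used in its other direction in the proof of Lemma \ref{lem2}. All the work therefore goes into producing the decomposition $A \oplus B$ out of the positivity hypothesis, and the main tools will be results established earlier in this paper.

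First I would record two standard facts about vector bundles on the curve $M$. Since $E$ is semiample it is nef, and for a vector bundle on a smooth projective curve nefness is equivalent to $\mu_{\min}(E)\, \geq\, 0$, where $\mu_{\min}$ denotes the smallest slope occurring in the Harder--Narasimhan filtration (see \cite{La2}). Secondly, by Hartshorne's criterion for ampleness on curves, a vector bundle is ample if and only if $\mu_{\min}\, >\, 0$, equivalently every quotient bundle has positive degree (see \cite{La2}). With these in hand I would pass to the Harder--Narasimhan filtration $0\,=\, E_0\, \subset\, E_1\, \subset\, \cdots\, \subset\, E_k\,=\, E$, whose successive slopes satisfy $\mu_1\, >\, \cdots\, >\, \mu_k\,=\, \mu_{\min}(E)\, \geq\, 0$. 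If $\mu_k\, >\, 0$, then $E$ is already ample and we are in the degenerate case $B\,=\, 0$. Otherwise $\mu_k\,=\, 0$, and I set $A\,:=\, E_{k-1}$ and $Q\,:=\, E/E_{k-1}$; since $M$ is a smooth curve, $E_{k-1}$ is saturated, hence a subbundle, and $Q$ is a vector bundle. Here $\mu_{\min}(A)\,=\, \mu_{k-1}\, >\, 0$, so $A$ is ample, while $Q$ is semistable of slope $0$, so $c_1(Q)\,=\, \deg Q\,=\, 0$.

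The decisive step is then to split the exact sequence $0\, \longrightarrow\, A\, \longrightarrow\, E\, \longrightarrow\, Q\, \longrightarrow\, 0$. Because $Q$ is a quotient of the semiample bundle $E$, it is semiample, and since $c_1(Q)\,=\, 0$, Theorem \ref{thm1} shows that $Q$ is \'etale trivial and in particular finite. Moreover, $E$ being semiample together with $c_1(Q)\,=\, 0$ is exactly the hypothesis of Proposition \ref{prop2}, which gives that the sequence splits: $E\,=\, A\oplus Q$, with $A$ ample and $Q$ finite. Applying \cite[p.~40, Theorem 1.1]{BP} to this decomposition yields that $E$ is virtually globally generated, which completes the argument. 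I expect the only genuinely delicate point to be the Harder--Narasimhan bookkeeping: one must make sure the slope-zero part of $E$ is captured as the final \emph{quotient} $Q$ (so that both the ``quotient of a semiample bundle is semiample'' principle and Proposition \ref{prop2} apply to it) rather than as a subbundle, and that the complementary subbundle $A$ really has strictly positive $\mu_{\min}$; once these are checked, the remainder is a direct assembly of Theorem \ref{thm1}, Proposition \ref{prop2}, and \cite{BP}.
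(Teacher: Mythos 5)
Your proposal is correct and takes essentially the same route as the paper: Harder--Narasimhan filtration, the minimal-slope quotient $Q$, the dichotomy $\deg Q > 0$ (where $E$ is ample) versus $\deg Q = 0$ (where Theorem \ref{thm1} makes $Q$ \'etale trivial and Proposition \ref{prop2} splits off the ample part $E_{k-1}$), and finally \cite[Theorem 1.1]{BP}. The only cosmetic differences are that you get $\deg Q \geq 0$ from the standard fact that nef on a curve means $\mu_{\min} \geq 0$, whereas the paper derives it from a nonzero section of a semistable symmetric power $\mathrm{Sym}^{n_0}(Q)$ (via Ramanan--Ramanathan), and you quote Hartshorne's criterion $\mu_{\min} > 0 \Rightarrow$ ample directly where the paper assembles ampleness from the semistable graded pieces by extensions; also note that your formula $A = E_{k-1}$ silently needs the degenerate case $k=1$ (where $A = 0$ and $E = Q$ is finite), which the paper treats as a separate, explicitly stated case.
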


\begin{proof}
Let
\begin{equation}\label{e5}
0\,=\, F_0\, \subset\, F_1\, \subset\, \cdots \, \subset\, F_{\ell-1}\, \subset\, F_\ell \,=\, E
\end{equation}
be the Harder--Narasimhan filtration of $E$ (see \cite[p.~16, Theorem 1.3.4]{HL}); so $E$ is
semistable if and only if $\ell\,=\,1$. Consider the quotient vector bundle
$Q\, :=\, E/F_{\ell-1}$ in \eqref{e5}. The quotient map
\begin{equation}\label{q}
q\, :\, E\, \longrightarrow\, E/F_{\ell-1} \,=\, Q
\end{equation}
produces an embedding
$$
\iota\,\, :\,\, {\mathbb P}(Q) \,\, \hookrightarrow\, {\mathbb P}(E),
$$
such that $\iota^* {\mathcal O}_{{\mathbb P}(E)}(1)\,=\, {\mathcal O}_{{\mathbb P}(Q)}(1)$.
Therefore, the condition that $E$ is semiample implies that $Q$ is semiample. In particular,
for some positive integer $n_0$, the symmetric product $\text{Sym}^{n_0}(Q)$ admits a
nonzero section.

The above vector bundle $\text{Sym}^{n_0}(Q)$ is semistable because $Q$ is so
\cite[p.~285, Theorem 3.18]{RR}. Now, since the semistable vector bundle $\text{Sym}^{n_0}(Q)$ admits
a nonzero section, we conclude that $$\text{degree}(\text{Sym}^{n_0}(Q))\,\, \geq\,\, 0.$$ This implies that
\begin{equation}\label{e6}
\text{degree}(Q)\,\, \geq\,\, 0.
\end{equation}

We first assume that
\begin{equation}\label{e7}
\text{degree}(Q)\,\, > \,\, 0.
\end{equation}

We observe that any semistable vector bundle $W$ on $M$ of positive degree is ample.
Indeed, the degree of any quotient of a semistable vector bundle $W$ of positive degree is
positive; this follows immediately from the definition of semistability. This
implies that $W$ is ample \cite[p.~80, Proposition 2.1(ii)]{Ha1}. From \eqref{e7} it follows immediately
that the degree of all quotients $F_i/F_{i-1}$, $1\, \leq\, i\, \leq\, \ell-1$, in \eqref{e5} are positive.
So $F_i/F_{i-1}$ is ample for every $1\, \leq\, i\, \leq\, \ell-1$.

If $0\, \longrightarrow\, A \, \longrightarrow\, B \, \longrightarrow\, C \, \longrightarrow\, 0$ is a short 
exact sequence of vector bundles on $M$ where $A$ and $C$ are ample, then $B$ is also ample \cite[p.~13, 
Proposition 6.1.13(ii)]{La2}. Since $F_i/F_{i-1}$ in \eqref{e5} is ample for every $1\, \leq\, i\, \leq\, 
\ell-1$, we now conclude that $E$ is ample. In particular, $E$ is semiample.

In view of \eqref{e6}, now assume that
\begin{equation}\label{e8}
\text{degree}(Q)\,\, = \,\, 0.
\end{equation}

First assume that $E$ is semistable. Then Theorem \ref{thm1} says that $E\,=\, Q$ is \'etale trivial. Hence $E\,=\, Q$
is virtually globally generated. So now we assume that $E$ is\, \textit{not}\, semistable. So in \eqref{e5}
we have $\ell\, \geq\,2$.

The kernel of the quotient map $q$ in \eqref{q} is $F_{\ell-1}$. We have the
short exact sequence
$$
0 \, \longrightarrow\, F_{\ell-1} \, \longrightarrow\, E \, \stackrel{q}{\longrightarrow}\, Q
\, \longrightarrow\, 0.
$$
In view of \eqref{e8}, from Proposition \ref{prop2} we conclude that $E\,=\, F_{\ell-1}\oplus Q$.

The vector bundle $F_{\ell-1}$ is ample because its Harder--Narasimhan filtration is
$$
0\,=\, F_0\, \subset\, F_1\, \subset\, \cdots \, \subset\, F_{\ell-1},
$$
and we have $\text{degree}(F_i/F_{i-1}) \, >\, 0$ for all $1\,\leq\, i\,\leq\, \ell-1$. Also,
$Q$ is \'etale trivial by Theorem \ref{thm1}.
Since $F_{\ell-1}$ is ample and $Q$ is \'etale trivial, it follows from
\cite[p.~40, Theorem 1.1]{BP} that $E\,=\, F_{\ell-1}\oplus Q$ is virtually globally generated.
\end{proof}

Lemma \ref{lem2} and Proposition \ref{prop1} together give the following:

\begin{theorem}\label{thm2}
A vector bundle $E$ on $M$ is semiample if and only if $E$ is virtually globally generated.
\end{theorem}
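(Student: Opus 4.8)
The plan is to establish the two implications of the asserted equivalence separately. Both have in fact already been carried out among the preceding results, so the theorem will follow at once by combining them; the only task is to identify which established statement supplies each direction.

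For the implication that every virtually globally generated bundle is semiample, I would simply invoke Lemma \ref{lem2}. The strategy there rests on the structure theorem for virtually globally generated bundles on a curve, which yields a decomposition $E\,=\, A\oplus B$ with $A$ ample and $B$ finite. Ample bundles are trivially semiample; finite bundles are \'etale trivial and hence semiample by Theorem \ref{thm1}; and semiampleness is preserved under direct sums. Combining these three facts gives that $E$ is semiample.

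For the converse, that every semiample bundle is virtually globally generated, I would appeal to Proposition \ref{prop1}. Here the argument runs through the Harder--Narasimhan filtration of $E$ and reduces matters to the top quotient $Q\,=\, E/F_{\ell-1}$, whose degree is nonnegative because a nonzero section of some symmetric power $\text{Sym}^{n_0}(Q)$ exists. One then splits into the case $\text{degree}(Q)\, >\, 0$, where $E$ turns out to be ample, and the case $\text{degree}(Q)\,=\, 0$, where the vanishing $c_1(Q)\,=\,0$ lets Proposition \ref{prop2} force the sequence $0\, \to\, F_{\ell-1}\, \to\, E\, \to\, Q\, \to\, 0$ to split, while Theorem \ref{thm1} identifies $Q$ as \'etale trivial; the structure theorem then recombines the ample piece $F_{\ell-1}$ with the \'etale trivial piece $Q$ into a virtually globally generated bundle.

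With both directions in hand, the equivalence is immediate, and I expect the theorem itself to add no new difficulty beyond packaging Lemma \ref{lem2} and Proposition \ref{prop1}. The substantive obstacle lies entirely in the second implication: the first half is a formal consequence of the decomposition theorem together with Theorem \ref{thm1}, whereas recovering virtual global generation from semiampleness genuinely requires the degree dichotomy and, in the borderline case $\text{degree}(Q)\,=\,0$, the splitting furnished by Proposition \ref{prop2}.
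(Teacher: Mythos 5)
Your proposal is correct and is exactly the paper's proof: Theorem \ref{thm2} is obtained there by simply combining Lemma \ref{lem2} (virtually globally generated implies semiample) with Proposition \ref{prop1} (semiample implies virtually globally generated), and your summaries of how those two results are themselves proved match the paper's arguments. Nothing further is needed.
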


\section*{Acknowledgements}

We thank the referee for helpful comments. The first-named author is partially supported
by a J. C. Bose Fellowship (JBR/2023/000003).

\end{document}